\theoremstyle{plain}
\newtheorem{theorem}{Theorem}[section]
\newtheorem{lemma}[theorem]{Lemma}
\newtheorem{corollary}[theorem]{Corollary}
\newtheorem*{theorem*}{Theorem}
\theoremstyle{remark}
\newtheorem{remark}[theorem]{Remark}
\theoremstyle{definition}
\newtheorem{definition}[theorem]{Definition}
\numberwithin{equation}{section}
\begin{document}

\title{Convergence of equilibrium measures corresponding to finite
subgraphs of infinite graphs: new examples}

\author{B.M. Gurevich \\
Mech. and Math. Dept. of Moscow State University,\\
Institute for Information Transmission Problems\\
of the Russian Academy of Sciences}

2010 Mathematics Subject Classification: 37A60, 37D35, 82B20

Keywords: positive matricies, loaded graphs, stable positive graphs,
unstable positive graphs, equilibrium measures for finite and
infinite graphs.

\date{}

\maketitle

\bigskip

\begin{abstract}
A problem from thermodynamic formalism for countable symbolic Markov
chains is considered. It concerns asymptotic behavior of the
equilibrium measures corresponding to increasing sequences of finite
sub-matrices of an infinite nonnegative matrix  $A$ when these
sequences converge to $A$. After reviewing the results obtained up
to now, a solution of the problem is given for a new matrix class. A
geometric language of loaded graphs instead of the matrix language
is used.

\end{abstract}

\section{Introduction}

\label{introd}
\smallskip

This paper is about asymptotic behavior of the equilibrium measures
corresponding to increasing sequences of finite sub-matrices of an
infinite nonnegative matrix. This is a part of thermodynamic
formalism for the countable symbolic Markov chains.

A starting material for developing thermodynamic formalism is a pair
$(T,f)$, where $T$ is a map of some topological space $X$ to itself
and $f$ a continuous function from $X$ to $\mathbb R$. In the case
under consideration, $T$ is a Markov shift in the space $X$ of the
sequences $x=(x_i,-\infty<i<\infty)$, $x_i\in V$, where $V$ is an
infinite countable set and $f$ depends on a finite number of
coordinates $x_i$ (without loss of generality only dependence on
$x_0$ and $x_1$ can be assumed).

M.I. Malkin \cite{M1} was the first who considered asymptotic
behavior of the equilibrium measures corresponding to an increasing
sequence of finite sub-matrices $A_n$ of an infinite nonnegative
matrix $A$ when $A_n\to A$ in a natural sense. A more general
approach was suggested in \cite{GS}. Already in that work a
geometric language based on the notion of a loaded graph and
equivalent to the matrix language was used together with the latter.
In some cases the geometric language is more relevant. In the
present paper it is used as well.

The answer to the question we address depends on the type of the
loaded graph (or the nonnegative matrix) under consideration. We
mean the classification of the loaded graphs which is almost
completely similar to that of the countable state Markov chains: we
have transient and recurrent loaded graphs and, among the latter,
null-recurrent and positive recurrent loaded graphs; finally,
positive recurrent loaded graphs can be stable positive and unstable
positive.

The last class differs from the others in that it demonstrates some
diversity in behavior of subgraphs: an infinite graph can have two
sequences of finite subgraphs one of which induces the equilibrium
measures converging to the equilibrium measure of the whole graph,
while the equilibrium measures of the other sequence tend to the
zero measure. It was conjectured that this is the case for every
unstable positive loaded graph, but now we know only some families
of such graphs. In what follows we describe these families and
present a new one essentially different from them.

\section{Preliminary information and statement of the problem}
\label{Notat}
\smallskip

This section contains some definitions and facts from thermodynamic
formalism for countable state Markov chains, primarily those used
below; see \cite{GS} and \cite{GN} for more detailed presentation.

Let $G=(V,E)$ be a directed graph with finite or infinite countable
vertex set $V$ and edge set $E\subset V\times V$; we write $V=V(G)$,
$E=E(G)$. If $e=(u,v)\in E$, we write $s(e)=u$, $t(e)=v$ and refer
to $e$ as the edge going from $u$ to $v$ or the edge with initial
vertex $u$ and terminal vertex $v$. A path $\gamma$ of length $\ell$
in $G$ going from $u$ to $v$ is a sequence of edges
$\gamma=e_1,\dots,e_{\ell}$ such that
$$
s(e_1)=u,\ \ t(e_{\ell})=v,\ \ s(e_{i+1})=t(e_i),\ \ 1\le
i\le\ell-1.
$$
We write $e'\in\gamma$ if $e'=e_i$ for some $i\in\{1,\dots,\ell\}$,
and we write $v'\in\gamma$ if $v'=s(e_i)$ or $v'=t(e_i)$ for some
$i$.

\begin{remark}
\label{remark1} It is sometimes convenient to treat a path as a
sequence of vertices $v_1,\dots,v_n$ such that $(v_i,v_{i+1})\in E$,
$i,\dots,n-1$, rather than a sequence of edges. The match between
this definition and the previous one is evident and will be used
below by default.
\end{remark}

We say that the above path {\it goes through} the vertices $s(e_i)$,
$2\le i\le\ell$, and $t(e_j)$, $1\le j\le\ell-1$. There can be paths
that go through their initial and/or terminal vertices, i.e., visit
these vertices at least twice. A graph is referred to as {\it
connected} if for each pair $u,v$ of its vertices there is a path
going from $u$ to $v$. In what follows we always assume that the
infinite graph $G$ under consideration is connected.

Let $G=(V,E)$ and $G\,'=(V',E\,')$ be two graphs. One says that
$G\,'$ is a subgraph of $G$ (and writes $G\,'\subset G$) if
$V'\subset V$ and $E\,'\subset E$.

Let $\mathcal W:E(G)\to(0,\infty)$ be a function on $E(G)$ (a {\it
weight function}). The pair $(G,\mathcal W)$ is referred to as a
{\it loaded graph} (LG for short). The number $\mathcal W(e)$, $e\in
E$, is the weight of $e$, and the product
$$
\mathcal W(\gamma):=\prod_{i=1}^\ell\mathcal W(e_i)
$$
is the {\it weight of the path} $\gamma=e_1,\dots,e_{\ell}$. For
every vertex $v\in V(G)$ we denote by $\Gamma_{v,v}$ the family of
all paths $\gamma$ in $G$ going from $v$ to $v$, and by
$\Gamma_{v,v|v}$ the set of all $\gamma\in\Gamma_{v,v}$ that do not
pass through $v$. The elements of $\Gamma_{v,v}$ and
$\Gamma_{v,v|v}$ will be referred to as $v$-{\it cycles} and simple
$v$-{\it cycles}, respectively.

For a fixed $v\in V(G)$, let $q_v(n)$ be the sum of weights over all
simple $v$-cycles of length $n$ and
\begin{equation}
\label{GenFunction} \Phi_{v,v|v}(G,\mathcal W,z):=\sum_{n=1}^\infty
q_v(n)z^n=\sum_{\gamma\in\Gamma_{v,v|v}}\mathcal
W(\gamma)z^{\ell(\gamma)}.
\end{equation}
The radius of convergence of this power series is denoted by
$R_v=R(G,\mathcal W,v)$. Assume that $R_v>0$. Then $R_{v'}>0$ for
all $v'\in V(G)$ (since $G$ is connected). In what follows we use
the function $\Phi_{v,v|v}(G,\mathcal W,z)$ only for $z\ge0$.

\begin{definition}
\label{exhaust} A sequence of subgraphs $G_k=(V_k,E_k)$ of an
infinite graph $G=(V,E)$ is said to be {\it exhaustive} if
$$
G_k\subset G_{k+1},\ k\ge1,\ \ \bigcup_{k\ge 1}V_k=V,\ \
\bigcup_{k\ge 1}E_k=E.
$$
\end{definition}

\begin{definition}
\label{positive} An LG $(G,\mathcal W)$ is said to be {\it stable
positive} (for short SPLG), {\it unstable positive} (UPLG), and {\it
null-recurrent} (NRLG) if there is a vertex $v\in V(G)$ such that
\begin{equation}
\label{stable} \Phi_{v,v|v}(G,\mathcal W,R_v)>1,
\end{equation}
\begin{equation}
\label{unstable} \Phi_{v,v|v}(G,\mathcal W,R_v)=1, \ \
\frac{d}{dz}\Phi_{v,v|v}(G,\mathcal W,z)|_{z=R_v}
 <\infty,
\end{equation}
and
\begin{equation}
\label{0-recurrent} \Phi_{v,v|v}(G,\mathcal W,R_v)=1, \ \
\frac{d}{dz}\Phi_{v,v|v}(G,\mathcal W,z)|_{z=R_v}=\infty,
\end{equation}
respectively. SPLGs and UPLGs together form the set of {\it positive
recurrent} LGs (PRLGs).
\end{definition}

Since $G$ is connected, the conditions \eqref{stable} --
\eqref{0-recurrent} are obeyed by all $v\in V(G)$ if so is by at
least one.

In \cite{GS}, \S3 one can find some characteristic properties of
SPLGs and UPLGs as well as justification of the terminology used
here.

PRLGs and NRLGs together form the set of {\it recurrent} LGs. UPLGs
are intermediate in this set between SPLGs and NRLGs. This fact is
most conspicuous in behavior of exhaustive sequences of finite
subgraphs: while in every of the two extreme classes all such
sequences behave in some sense similarly (although differently in
different classes), for each UPLG studied so far there are two kinds
of exhaustive sequences: some of the sequences behave as all
exhaustive sequences in SPLGs, others do as all exhaustive sequences
in NRLGs. To be more specific we have to remind several more facts
from thermodynamic formalism for countable symbolic Markov chains
(see \cite{GS}).

Let $\Omega$ denote the set of the two-sided infinite paths in $G$,
i.e. (see remark \ref{remark1}),
$$
\Omega(G)=\{\omega=(\omega_i)|_{i\in\mathbb
Z}:(\omega_i,\omega_{i+1})\in E \ \text{for all}\ i\in\mathbb Z\}.
$$
If $(G,\mathcal W)$ is a PRLG, then there is a measure
$\mu^{G,\mathcal W}$ that maximizes some functional on $\mathcal
M(G)$, the set of the shift invariant probability measures on
$\Omega(G)$, and is referred to as an {\it equilibrium} (more
precisely, $\mathcal W$-equilibrium) measure. It is uniquely
determined by the above conditions and is a Markov measure. Outside
the class of PRLGs such a measure does not exist. The explicit forms
of both the measure $\mu^{G,\mathcal W}$ and the functional that
determines it are inessential for what follows (this can be found in
\cite{GS}, \S\,4).

If $G\,'$ is a finite connected graph, then $(G\,',W\,')$ is a SPLG
for every weight function $W\,'$, and hence the measure
$\mu^{G\,',W\,'}$ exists. This holds true in particular for each
finite connected $G\,'\subset G$. Since the path space
$\Omega(G\,')$ is contained in $\Omega(G)$, the measure
$\mu^{G\,',W\,'}$ can be taken as defined on $\Omega(G)$.

One deals with this situation in case of exhaustive sequences of
finite subgraphs of an infinite graph $G$. Let $\{G_k,\,k\in\mathbb
Z\}$ be such a sequence and $\mathcal W$ a weight function on
$E(G)$. Then one can consider the sequence of loaded subgraphs
$(G_k,\mathcal W_k)$, where $\mathcal W_k$ is the restriction of
$\mathcal W$ to $E(G_k)$; we will write $(G_k,\mathcal W)$ instead
of $(G_k,\mathcal W_k)$. A challenge is to clarify behavior of the
sequence of measures $\mu^{G_k,\mathcal W}$ as $k\to\infty$. It is
known (see \cite{GS}, \S\,6) that if $(G,\mathcal W)$ is stable
positive, then $\mu^{G_k,\mathcal W}$ converges weakly to
$\mu^{G,\mathcal W}$, and if $(G,\mathcal W)$ is null-recurrent,
then $\mu^{G_k,\mathcal W}$ converges to the zero measure, while if
$(G,\mathcal W)$ is unstable positive, then it is possible both. We
say that a sequence $\{G_k\}$ is {\it regular} or {\it irregular}
relative to $\mathcal W$ if the former or latter situation is
realized, respectively.

It has been repeatedly conjectured that if $G$ is a countable graph
and $\mathcal W$ a weight function such that $(G,\mathcal W)$ is an
UPLG, then both regular and irregular (relative to $\mathcal W$)
sequences exist in $G$. Since this conjecture is not proved, it is
desirable to extend the family of graphs $G$ for which it can be
confirmed as much as possible.

Two classes of such graphs have been known up to now. One of them is
formed by the so called "petal"\ graphs (see \cite{GS}, Def. 5.14),
in which all cycles go through a unique vertex $v$ and every two
simple $v$-cycles have no common vertex other than $v$. A wider
class is formed by "bundles of cascade graphs" \cite{GN}, where all
cycles go through a fixed finite vertex set $V_0$ and for each
vertex $v$, the common vertices for every pair of simple $v$-cycles
are contained in $V_0$. This property is essentially used in proving
the above-stated conjecture. In the next section we introduce a
class of "linear graphs", which do not have this property, while for
them, the above conjecture is true as before. In the subsequent two
sections this will be proved by means of some new arguments.

We finish this section with the following criterium, which
immediately follows from Theorem 6.4 in \cite{GS}.
\begin{lemma}
\label{critReg} Let $(G,\mathcal W)$ be a UPLG and subgraphs $G_k$
form an exhaustive sequence. Let $R(k)>0$ be such that
$\Phi_{v,v|v}(G_k,\mathcal W,R(k))=1$. Then $\{G_k\}$ is regular or
irregular if and only if there exists a vertex $v\in V$ such that
\begin{equation}
\label{regular}
\lim_{k\to\infty}\frac{d}{dz}\Phi_{v,v|v}(G_k,\mathcal
W,z)|_{z=R(k)}=\frac{d}{dz}\Phi_{v,v|v}(G,\mathcal W,z)|_{z=R_v},
\end{equation}
or the limit equals $\infty$, respectively.
\end{lemma}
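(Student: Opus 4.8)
The plan is to reduce both kinds of convergence to a single scalar — the mass that the equilibrium measure puts on the reference vertex — and then to read that scalar off the generating function $\Phi_{v,v|v}$. The tool is a Kac-type (mean-return-time) formula. For a PRLG $(G,\mathcal W)$ and a vertex $v$, let $z^{*}$ be the point with $\Phi_{v,v|v}(G,\mathcal W,z^{*})=1$. Then the numbers $\mathcal W(\gamma)(z^{*})^{\ell(\gamma)}$, $\gamma\in\Gamma_{v,v|v}$, form a probability distribution on the simple $v$-cycles whose mean length is $z^{*}\frac{d}{dz}\Phi_{v,v|v}(G,\mathcal W,z)\big|_{z=z^{*}}$, and the $\mathcal W$-equilibrium measure assigns to the cylinder $\{\omega\in\Omega(G):\omega_{0}=v\}$ the reciprocal of this mean:
\[
\mu^{G,\mathcal W}(\{\omega:\omega_{0}=v\})=\Bigl(z^{*}\,\frac{d}{dz}\Phi_{v,v|v}(G,\mathcal W,z)\big|_{z=z^{*}}\Bigr)^{-1}.
\]
For the UPLG $G$, \eqref{unstable} forces $z^{*}=R_{v}$; for a finite (hence stable positive) $G_{k}$, the defining relation of $R(k)$ gives $z^{*}=R(k)$. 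This is the substance of Theorem 6.4 in \cite{GS}, transcribed into the language of $\Phi_{v,v|v}$.

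Next I would note that $R(k)\to R_{v}$. Exhaustiveness makes the coefficients $q_{v}(n)$ of $\Phi_{v,v|v}(G_{k},\mathcal W,\cdot)$ increase in $k$ to those of $\Phi_{v,v|v}(G,\mathcal W,\cdot)$, so $\Phi_{v,v|v}(G_{k},\mathcal W,z)\uparrow\Phi_{v,v|v}(G,\mathcal W,z)$ for every $z\ge0$. Each $\Phi_{v,v|v}(G_{k},\mathcal W,\cdot)$ is a strictly increasing polynomial vanishing at $0$, so $R(k)$ is well defined; since the limit function equals $1$ at $R_{v}$, the numbers $R(k)$ decrease to $R_{v}>0$.

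Now for the equivalence itself. By the dichotomy recalled before the lemma, an exhaustive sequence for a UPLG is either regular, $\mu^{G_{k},\mathcal W}\to\mu^{G,\mathcal W}$ weakly, or irregular, $\mu^{G_{k},\mathcal W}\to 0$; and (again Theorem 6.4 of \cite{GS}) this is decided by the mass at $v$, namely the sequence is regular exactly when $\mu^{G_{k},\mathcal W}(\{\omega_{0}=v\})\to\mu^{G,\mathcal W}(\{\omega_{0}=v\})>0$ and irregular exactly when $\mu^{G_{k},\mathcal W}(\{\omega_{0}=v\})\to 0$. Inserting the Kac formula, regularity becomes $R(k)\frac{d}{dz}\Phi_{v,v|v}(G_{k},\mathcal W,z)\big|_{z=R(k)}\to R_{v}\frac{d}{dz}\Phi_{v,v|v}(G,\mathcal W,z)\big|_{z=R_{v}}$ and irregularity becomes divergence of the left-hand side to $\infty$. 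Dividing through by $R(k)\to R_{v}>0$ converts these into \eqref{regular} and its $\infty$-version, which is the assertion of the lemma.

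I expect the only real obstacle to be the step where full weak convergence (or convergence to the zero measure) is shown to be faithfully recorded by the single number $\mu^{G_{k},\mathcal W}(\{\omega_{0}=v\})$: one must rule out a partial loss or a redistribution of mass among cylinders that would be invisible at $v$. The justification rests on the explicit Markov form of the equilibrium measures together with the convergence of the loop weights $\mathcal W(\gamma)R(k)^{\ell(\gamma)}\to\mathcal W(\gamma)R_{v}^{\ell(\gamma)}$, which fixes every conditional probability in the limit and leaves the normalization at $v$ as the sole carrier of total mass. This is precisely the part that Theorem 6.4 of \cite{GS} supplies, so here it may simply be invoked.
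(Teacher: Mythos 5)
The paper gives no argument for this lemma beyond declaring it an immediate consequence of Theorem~6.4 in \cite{GS}, and your reconstruction---expressing $\mu^{G_k,\mathcal W}(\{\omega_0=v\})$ as $\bigl(R(k)\,\frac{d}{dz}\Phi_{v,v|v}(G_k,\mathcal W,z)|_{z=R(k)}\bigr)^{-1}$ via the mean-return-time (Kac) formula, checking $R(k)\downarrow R_v$, and deferring to that same theorem the fact that this single number governs weak convergence versus convergence to the zero measure---is a correct unpacking of exactly that citation. The only blemish is your opening appeal to a ``dichotomy'' (every exhaustive sequence is regular or irregular), which the paper does not assert and whose concluding remarks in fact contradict by mixing regular with irregular sequences; fortunately your argument never uses it, since both equivalences are read off directly from the mass at $v$.
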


\section{Linear graphs}
\label{LinGraph}

In this section we use the notation introduced in the previous one.

Consider a directed graph $G=(V,E)$, where the vertex set $V$ is
$\mathbb N$, the set of positive integers, and the edge set $E$
falls into two subsets, $E_1$ and $E_2$, where $E_1$ consists of all
pairs of the form $(i,1)$ and $(i,i+1$), $i\in\mathbb N$, while
$E_2$ consists of some pairs of the form $(i,j)$ such that $i<j-1$
and the following {\it finiteness condition} holds: for each $i$,
there is only a finite number of vertices $j$ such that $(i,j)\in
E_2$. Such a graph $G$ will be referred to as a {\it linear graph}.

Notice that the vertex $1\in V(G)$ belongs to every cycle in $G$,
and this resembles a petal graph (see \S\,\ref{Notat}). For
convenience of reference to \S\,\ref{Notat} we sometimes denote this
vertex by $v$. The edges of the form $(i,1)$, and $(i,j)$ with
$j>i+1$ will be called {\it backward} and {\it forward} jumps,
respectively. It is clear from the structure of $G$ that every
simple $v$-cycle contains a unique backward jump. For a $v$-cycle
$\gamma$ let $M(\gamma)$ denote its {\it maximal vertex}, i.e.,
$M(\gamma):=\max\{i\in V:i\in\gamma\}$.

For every $n\ge 1$ we set $k_n=:\max_\gamma M(\gamma)$, where max is
over all simple $v$-cycles of length $n$.

We start the proof of the conjecture for a linear graph with
constructing an irregular sequence of its finite subgraphs.

\section{Existence of an irregular sequence}
\label{Irregular}
\smallskip

Let $\Gamma$ be a family of paths in the graph $G$ and $G(\Gamma)$
the minimal subgraph of $G$ containing all paths from $\Gamma$. We
say that $G(\Gamma)$ is generated by $\Gamma$. For a linear graph
$G$ introduced in Section \ref{LinGraph}, denote by $\Gamma_\ell$
the family of all simple $v$-cycles of length $\ell$ in $G$,
$\ell\ge1$, and set
\begin{equation}
\label{sub-graphs} G_n:=G(\cup_{l\le n}\Gamma_\ell),\ \
G_{n,m}:=G((\cup_{\ell\le n}\Gamma_\ell)\cup\Gamma_m),\ m>n.
\end{equation}
Clearly $G_n$ and $G_{n,m}$ are finite graphs. Since they are
generated by families of $v$-cycles, they are connected and the sets
of their vertices and edges coincide with the corresponding sets in
the generating families. However they can contain $v$-cycles that
are absent in these families, because some $v$-cycles in
$\Gamma_{n,m}$ have common vertices different from $v$ and common
edges. (This situation severely hampers constructing regular and
irregular sequences of finite subgraphs.)

Consider some other properties of the graphs $G_n$ and $G_{n,m}$. To
begin with we obtain from the sequence $\{k_n\}$ defined in
\S\,\ref{LinGraph} another sequence $\{a(n)\}$ as follows: $a(n)=1$
for $1\le n\le k_2$ and $a(n)=i$ as $k_i<n\le k_{i+1}$, $i\ge2$. It
is evident that
\begin{equation}
\label{k(a(n))} k_{a(n)}<n \ \ {\rm as}\ \ n\ge k_1+1.
\end{equation}
\begin{lemma} \label{G(mn)} 1. The length of each simple $v$-cycle in $G_n$
does not exceed $k_n$.
\item{2}. There are no simple $v$-cycles in $G_{n,m}$ such that
\begin{equation}
\label{<l<} k_n<\ell(\gamma)<a(m).
\end{equation}
\end{lemma}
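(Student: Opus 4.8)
The plan is to reduce both statements to the behaviour of the maximal vertex $M(\gamma)$ and to two elementary inequalities. First I would record the shape of a simple $v$-cycle: since the only backward edges are the jumps $(i,1)$ and such a cycle cannot revisit $v=1$, it must increase strictly, $1<v_1<\dots<v_s$, and then close by its unique backward jump, which therefore issues from the maximal vertex $M(\gamma)=v_s$. This gives $\ell(\gamma)\le M(\gamma)$ for free (the $v_i$ are distinct integers $>1$, so $v_s\ge s+1=\ell(\gamma)$), while $M(\gamma)\le k_{\ell(\gamma)}$ is immediate from the definition of $k_n$. I would also note that $\{k_n\}$ is non-decreasing --- append one $+1$-edge at the top of a length-$n$ cycle realizing $k_n$ to obtain a length-$(n+1)$ cycle reaching $k_n+1$. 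Part 1 is then immediate: every vertex of $G_n$ lies on a simple $v$-cycle of length $\le n$ and so is $\le k_n$, whence any $\gamma\subset G_n$ satisfies $\ell(\gamma)\le M(\gamma)\le k_n$.

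For Part 2 the crucial step, which I expect to be the main obstacle, is to locate the backward jumps available in $G_{n,m}$. A simple $v$-cycle of length exactly $m$ has $m-1$ strictly increasing forward edges, hence reaches a vertex $\ge m$, and its unique backward jump issues from that vertex; so every backward jump contributed by a length-$m$ generator has the form $(b,1)$ with $b\ge m$. Every backward jump contributed by $G_n$ has the form $(b,1)$ with $b\le k_n$. Since $E(G_{n,m})$ is the union of $E(G_n)$ with the edge sets of the length-$m$ cycles, $G_{n,m}$ contains no backward jump issuing from a vertex in the open range $(k_n,m)$. Taking any simple $v$-cycle $\gamma\subset G_{n,m}$, its terminal jump $(M(\gamma),1)$ forces either $M(\gamma)\le k_n$, so that $\ell(\gamma)\le M(\gamma)\le k_n$, or $M(\gamma)\ge m$, so that $k_{\ell(\gamma)}\ge M(\gamma)\ge m$ and hence $\ell(\gamma)>a(m)$ by the definition of $a$. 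Either way $\ell(\gamma)\notin(k_n,a(m))$, which is the assertion.

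The difficulty is conceptual rather than computational: the constraint is not on lengths directly but on maximal vertices, where a genuine gap $(k_n,m)$ opens because the intermediate backward jumps are simply absent from $G_{n,m}$; the inequalities $\ell(\gamma)\le M(\gamma)\le k_{\ell(\gamma)}$ together with the monotonicity of $\{k_n\}$ are then just the bookkeeping that converts this vertex gap into the claimed length gap. I would finally check the boundary conventions --- the self-loop at $v$ with $k_1=1$, and the (frequent) case where $(k_n,a(m))$ is empty so that the statement holds vacuously --- but anticipate no real trouble there.
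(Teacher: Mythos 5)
Your proof is correct and follows essentially the same route as the paper's: both arguments reduce the claim to locating the backward jumps available in $G_n$ and $G_{n,m}$ (issuing from vertices $\le k_n$ or $\ge m$, with nothing in between) and then convert the resulting dichotomy on $M(\gamma)$ into the length gap via $\ell(\gamma)\le M(\gamma)\le k_{\ell(\gamma)}$ and $k_{a(m)}<m$. You are merely more explicit than the paper about the monotonicity of $\{k_n\}$ and the inequality $\ell(\gamma)\le M(\gamma)$, which it leaves implicit.
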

\begin{proof} 1. Clearly every simple $v$-cycle in $G$ is a path from $v$ to
$M(\gamma)$ extended by the backward jump $(M(\gamma),v)$. But there
are no vertices $i>k_n$ in $G_n$, which implies the first statement
of the lemma.

2. The second statement is worthy of proving only when $k_n<a(m)$.
Note that there are no backward jumps $(i,v)$ with $i>k_n$ and with
$i<m$ in $v$-cycles from $\cup_{\ell\le n}\Gamma_\ell$ and from
$\Gamma_m$, respectively. Hence there are no edges $(i,v)$ with
$k_n<i<m$ in $G_{n,m}$.

We now assume that there is a simple $v$-cycle $\gamma$ in $G_{n,m}$
such that \eqref{<l<} asserts. Then either $M(\gamma)\le k_n$, or
$M(\gamma)\ge m$. In the former case $\ell(\gamma)\le k_n$, while if
the latter case takes place and $\ell(\gamma)<a(m)$, then
$M(\gamma)\le k_{a(m)}<m$ (see \eqref{k(a(n))}). But we saw above
that $M(\gamma)\ge m$ for all $v$-cycles $\gamma$ with
$M(\gamma)>k_n$ in $G_{m,n}$. This contradiction completes the
proof.
\end{proof}

From now on we assume that $\mathcal W$ is a weight function such
that $(G,\mathcal W)$ is a UPLG.

\begin{corollary}
\label{GenFun} For all $n,m\in\mathbb N$ such that $m>n$, the
functions $\varphi_n(z):=\Phi_{v,v|v}(G_n,\mathcal W,z)$ and
$\varphi_{n,m}(z):=\Phi_{v,v|v}(G_{n,m},\mathcal W,z)$ are of the
form
\begin{equation}
\label{genFunct1} \varphi_n(z)=\sum_{i=1}^nq(i)z^i+
\sum_{i=n+1}^{k_n}q^{(1)}(n,i)z^i,
\end{equation}
\begin{equation}
\label{genFunct2}
\varphi_{n,m}(z)=\varphi_n(z)+\sum_{i=a(m)}^{k_m}q^{(2)}(m,i)z^i,
\end{equation}
where
\begin{equation}
\label{coef<} 0\le q^{(1)}(n,i),\, q^{(2)}(m,i)\le q(i) \text{ for
all } i \text{ and }\  q^{(2)}(m,m)=q(m).
\end{equation}
\end{corollary}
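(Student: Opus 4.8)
The plan is to read both generating functions as honest polynomials whose coefficient of $z^i$ is the total $\mathcal W$-weight of the simple $v$-cycles of length $i$ living in the relevant finite subgraph. Write $\tilde q_n(i)$ and $\tilde q_{n,m}(i)$ for these weighted counts in $G_n$ and $G_{n,m}$, so that $\varphi_n(z)=\sum_i \tilde q_n(i)z^i$ and $\varphi_{n,m}(z)=\sum_i \tilde q_{n,m}(i)z^i$. Because $G_n\subset G_{n,m}\subset G$ and all weights are positive, these counts are monotone under inclusion and bounded above by $q(i)$: that is, $0\le \tilde q_n(i)\le \tilde q_{n,m}(i)\le q(i)$. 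Once the two representations \eqref{genFunct1}--\eqref{genFunct2} are pinned down, these inequalities will immediately yield the coefficient bounds \eqref{coef<}.

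For \eqref{genFunct1} I would argue as follows. For $i\le n$, every simple $v$-cycle of length $i$ in $G$ belongs to $\Gamma_i$ and is therefore among the generators of $G_n$, hence contained in $G_n$; since $G_n\subset G$ produces no extra cycles, the coefficient is exactly $q(i)$, giving the first sum. By Lemma \ref{G(mn)}(1) no simple $v$-cycle in $G_n$ has length exceeding $k_n$, so all coefficients of degree $>k_n$ vanish. The remaining coefficients, in degrees $n+1,\dots,k_n$, are named $q^{(1)}(n,i)=\tilde q_n(i)$ and lie in $[0,q(i)]$ by the inclusion bound. This part is routine.

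For \eqref{genFunct2} set $q^{(2)}(m,i):=\tilde q_{n,m}(i)-\tilde q_n(i)\ge0$. The upper end of the support is controlled by the $G_{n,m}$-analogue of Lemma \ref{G(mn)}(1): the largest vertex occurring in $G_{n,m}$ is $k_m$, and since the length of a simple $v$-cycle never exceeds its maximal vertex, $\tilde q_{n,m}(i)=0$ for $i>k_m$. The identity $q^{(2)}(m,m)=q(m)$ comes from the fact that $\Gamma_m$ generates part of $G_{n,m}$, so every length-$m$ cycle of $G$ lies in $G_{n,m}$, whence $\tilde q_{n,m}(m)=q(m)$; in the regime $m>k_n$ in which the statement is applied, Lemma \ref{G(mn)}(1) forces $\tilde q_n(m)=0$, and the difference is $q(m)$.

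The lower end of the support, namely $q^{(2)}(m,i)=0$ for $i<a(m)$, requires showing that passing from $G_n$ to $G_{n,m}$ creates no new simple $v$-cycle of length below $a(m)$, i.e. $\tilde q_{n,m}(i)=\tilde q_n(i)$ there. Here I would take such a cycle $\gamma$ in $G_{n,m}$ and examine its unique backward jump $(M(\gamma),1)$: since backward jumps inherited from $\Gamma_m$ issue from vertices $\ge m$ while those inherited from $\cup_{\ell\le n}\Gamma_\ell$ issue from vertices $\le k_n$, one gets $M(\gamma)\le k_n$ or $M(\gamma)\ge m$; the latter is impossible for $\ell(\gamma)<a(m)$ by \eqref{k(a(n))}, so $M(\gamma)\le k_n$ and $\gamma$ uses only vertices $\le k_n$. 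Lemma \ref{G(mn)}(2) already rules out the lengths in $(k_n,a(m))$, so $\ell(\gamma)\le k_n$. The hard remaining step, which I expect to be the main obstacle, is to verify that every edge of such a low cycle in fact already lies in $G_n$ — equivalently, that the length-$m$ cycles contribute no genuinely new edge among the vertices $\le k_n$. This must be extracted from the rigid unique-backward-jump structure of simple $v$-cycles in a linear graph, and it is exactly the point at which the shared edges and vertices of the generating cycles — the phenomenon the authors warn "severely hampers" the construction — make the argument delicate.
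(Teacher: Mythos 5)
Your decomposition is the right one, and everything you actually prove is correct: the exact coefficients $q(i)$ for $i\le n$ in \eqref{genFunct1}, the vanishing of the coefficients above $k_n$ (resp.\ above $k_m$, which also needs the easy monotonicity $k_n\le k_m$), the bounds $0\le q^{(1)}(n,i),q^{(2)}(m,i)\le q(i)$, and $q^{(2)}(m,m)=q(m)$ in the regime $m>k_n$ in which the corollary is used. For comparison: the paper offers no proof at all, presenting the statement as an immediate consequence of Lemma \ref{G(mn)}. But Lemma \ref{G(mn)} only excludes simple $v$-cycles of $G_{n,m}$ with lengths in the window $(k_n,a(m))$; the vanishing of $q^{(2)}(m,i)$ for $i\le k_n$ requires that passing from $G_n$ to $G_{n,m}$ create no new simple $v$-cycles of length $\le k_n$, and nothing in the paper addresses this. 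So the step you single out as ``the main obstacle'' is exactly the unproved content of the corollary.

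Moreover, that step is not merely delicate: it can fail, so \eqref{genFunct2} as stated is false for some linear graphs. Take $E_2=\{(1,100),(5,7)\}$, $v=1$, $n=5$. Then $k_5=103$ and $E(G_5)$ consists only of the edges of the cycles $1\to\dots\to j\to1$ ($j\le5$) and $1\to100\to\dots\to j\to1$ ($j\le103$); in particular $(5,7)\notin E(G_5)$, since any cycle through $(5,7)$ has length $\ge6$. For large $m$ the graph $G_{5,m}$ acquires, from the length-$m$ cycles $1\to2\to\dots\to m\to1$ and $1\to\dots\to5\to7\to\dots\to(m+1)\to1$, all edges $(i,i+1)$ with $i<m$ together with $(5,7)$. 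Hence $\gamma^*=1,2,3,4,5,7,8,\dots,103,1$ is a simple $v$-cycle of $G_{5,m}$ of length $102$ that is absent from $G_5$, while $k_i=98+i$ gives $a(m)=m-99>102$ for $m\ge202$: the coefficient of $z^{102}$ in $\varphi_{5,m}$ is positive although \eqref{genFunct2} forces it to equal the zero coefficient of $z^{102}$ in $\varphi_5$. (This $G$ does admit weight functions making it a UPLG, so the standing hypothesis does not rescue the claim.) What does hold in general is only that every new cycle has length $>n$ --- its first edge outside $E(G_n)$ issues from a vertex at graph distance $\ge n-1$ from $v$ --- so the lower summation limit $a(m)$ in \eqref{genFunct2} must be weakened to $n+1$; since Lemma \ref{seqPHI'} relies precisely on the divergent factor $a(m)$, this is a real problem for the paper, not for your argument. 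Your instinct about where the difficulty sits was exactly right; the corollary cannot be proved as stated.
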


Denote
\begin{equation}
\label{genFunct} \varphi(z):=\Phi_{v,v|v}(G,\mathcal
W,z)=\sum_{i=1}^\infty q(i)z^i.
\end{equation}
It is self-evident that $\varphi_n$, $\varphi_{n,m}$ $\varphi$, and
all coefficients depend on $v$, but we somewhere omit $v$ from the
notation, because it is fixed.

It is clear that each of the functions $\varphi_n$, $\varphi_{n,m}$,
and $\varphi$ increases on the part of the semi-axis $\mathbb R^+$
where it is defined, and that the first two of them are polynomials
and hence are defined for all $z\ge 0$.

Let $R$ be the radius of convergence of the power series
\eqref{genFunct}. Since $(G,W)$ is a UPLG, we have
\begin{equation}
\label{unstab} 0<R<\infty, \ \ \varphi(R)=1,\ \ \varphi'(R)<\infty.
\end{equation}
We determine the numbers $R_n$ and $R_{n,m}$ by the equations
\begin{equation}
\label{defR} \varphi_n(R_n)= \varphi_{n,m}(R_{n,m})=1.
\end{equation}
Clearly $R<R_{n,m}<R_n$.

\begin{lemma}
\label{irregular} For every $n$
\begin{equation}
\label{limRnm}\varliminf_{m\to\infty}R_{n,m}=R.
\end{equation}
\end{lemma}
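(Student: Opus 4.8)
The plan is to prove the two inequalities $\varliminf_{m\to\infty}R_{n,m}\ge R$ and $\varliminf_{m\to\infty}R_{n,m}\le R$ separately. The first is immediate from the remark already recorded after \eqref{defR}, namely $R<R_{n,m}$ for every $m>n$: this forces every subsequential limit of the $R_{n,m}$ to be at least $R$. All the work therefore goes into the reverse inequality, which I would establish by contradiction. Suppose $\varliminf_{m\to\infty}R_{n,m}=L>R$ and fix any $R''$ with $R<R''<L$. By definition of the lower limit, $R_{n,m}>R''$ for all sufficiently large $m$. Since each $\varphi_{n,m}$ is a polynomial with nonnegative coefficients, it is nondecreasing on $[0,\infty)$, and by \eqref{defR} it satisfies $\varphi_{n,m}(R_{n,m})=1$; hence monotonicity yields
\begin{equation}
\label{upperbnd}
\varphi_{n,m}(R'')\le 1\qquad\text{for all large }m.
\end{equation}

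Next I would derive a contradiction from a \emph{single-term} lower bound on $\varphi_{n,m}(R'')$. By \eqref{genFunct2} and \eqref{coef<}, the representation of $\varphi_{n,m}$ contains the term $q^{(2)}(m,m)z^m=q(m)z^m$ with all remaining coefficients nonnegative, so for $z\ge0$ we have $\varphi_{n,m}(z)\ge q(m)z^m$, and in particular $\varphi_{n,m}(R'')\ge q(m)(R'')^m$. On the other hand $R$ is, by definition, the radius of convergence of $\varphi(z)=\sum_i q(i)z^i$ in \eqref{genFunct}, so by the Cauchy--Hadamard formula $\limsup_{m\to\infty}q(m)^{1/m}=1/R$. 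Choosing a constant $c$ with $1/R''<c<1/R$, there are infinitely many $m$ with $q(m)>c^m$, and for each such $m$ one gets $\varphi_{n,m}(R'')\ge(cR'')^m$ with $cR''>1$. Thus $\varphi_{n,m}(R'')\to\infty$ along this subsequence, which contradicts \eqref{upperbnd} for the large $m$ in that subsequence. This proves $\varliminf_{m\to\infty}R_{n,m}\le R$ and hence the lemma.

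The decisive ingredient, and the only place where the specific combinatorial structure of $G_{n,m}$ enters, is the equality $q^{(2)}(m,m)=q(m)$ supplied by \eqref{coef<}: it is precisely what lets me retain the \emph{full} coefficient $q(m)$ in the lower bound and thereby exploit the divergence of the series $\sum_i q(i)z^i$ at any point $z=R''>R$. With only the generic estimate $q^{(2)}(m,m)\le q(m)$ the surviving term could be arbitrarily small and the argument would collapse. I therefore expect the genuine content to lie not in the analytic step above, which is essentially Cauchy--Hadamard plus monotonicity, but in the bookkeeping that guarantees the length-$m$ simple $v$-cycles persist in $G_{n,m}$ carrying their original total weight $q(m)$; in the present setup that bookkeeping is exactly what Corollary~\ref{GenFun} already delivers, so once it is invoked the estimate is effectively free.
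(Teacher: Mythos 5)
Your proof is correct and follows essentially the same route as the paper's: argue by contradiction, use Cauchy--Hadamard to extract a subsequence along which $q(m)$ is nearly $R^{-m}$, and exploit the single retained term $q^{(2)}(m,m)z^m=q(m)z^m$ from \eqref{coef<} to force $\varphi_{n,m}$ to exceed $1$ at the relevant point. The only (immaterial) difference is that you evaluate at a fixed $R''\in(R,L)$ and invoke monotonicity, whereas the paper bounds $q(m_i)(R_{\tilde n,m_i})^{m_i}$ directly at the roots $R_{\tilde n,m_i}$.
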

\begin{proof}
If the statement is not true, then
$\varliminf_{m\to\infty}R_{n,m}>R$ for some $n=\tilde n\in\mathbb N$
(because $R_{n,m}>R$ for all $n$ and $m>n$). Hence there are $\tilde
m$ and $\delta>0$ such that $R_{\tilde n,m}>R+\delta$ for all
$m>\tilde m$.

Since, by the Cauchy-Hadamard formula,
$\varlimsup_{m\to\infty}(q(m))^{1/m}=1/R$, one can find a sequence
of positive integers $m_i$ such that for all $i$
$$
(q_v(m_i))^{1/m_i}=R^{-1}+\alpha_i,
$$
where $\alpha_i\to 0$ as $i\to\infty$. Then
$q(m_i)=(R^{-1}+\alpha_i)^{m_i}$. Now, making use of the assumption
that $R_{\tilde n,m}>R+\delta$ for all $m>\tilde m$, one obtains,
for sufficiently large $i$,
$$
q(m_i)(R_{\tilde
n,m_i})^{m_i}\ge(R^{-1}+\alpha_i)^{m_i}(R+\delta)^{m_i}=(1+
\alpha_iR+\delta R^{-1}+\alpha_i\delta)^{m_i}.
$$
It is obvious that the last expression tends to infinity as
$i\to\infty$. Thus the same is true for $q(m_i)(R_{\tilde
n,m_i})^{m_i}$. But the coefficients of the polynomial
\eqref{genFunct2} are nonnegative and the coefficient of $z^m$
equals $q(m)$ (see \eqref{coef<}). Hence
$\lim_{i\to\infty}\varphi_{\tilde n,m_i}(R_{\tilde n,m_i})=\infty$,
which contradicts the definition of $R_{n,m_i}$ (see \eqref{defR}).
\end{proof}
\begin{lemma}
\label{seqPHI'} For every $n\in\mathbb N$ there exists a sequence of
positive integers $m_k>n$ such that
$$
\lim_{k\to\infty}m_k=\infty, \ \
\lim_{k\to\infty}\varphi'_{n,m_k}(R_{n,m_k})=\infty.
$$
\end{lemma}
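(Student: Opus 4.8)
The plan is to pass to a subsequence along which the short cycles leave a fixed amount of ``mass'' to be carried by the new cycles, all of which are long, and then to exploit that a polynomial whose monomials all have high degree has a correspondingly large derivative. The starting point is Lemma \ref{irregular}: since $\varliminf_{m\to\infty}R_{n,m}=R$, there is a sequence of integers $m_k>n$ with $m_k\to\infty$ and $R_{n,m_k}\to R$. All subsequent estimates are carried out along this sequence.

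First I would isolate the tail polynomial $P_m(z):=\varphi_{n,m}(z)-\varphi_n(z)=\sum_{i=a(m)}^{k_m}q^{(2)}(m,i)z^i$, which is legitimate by \eqref{genFunct2}; note that every monomial occurring in $P_m$ has degree at least $a(m)$. The defining relation $\varphi_{n,m}(R_{n,m})=1$ yields $P_m(R_{n,m})=1-\varphi_n(R_{n,m})$. Since $(G,\mathcal W)$ is a UPLG we have $0<R<\infty$ by \eqref{unstab}, so $\varphi$ is a genuine power series with infinitely many nonzero coefficients; using $q^{(1)}(n,i)\le q(i)$ from \eqref{coef<} this gives $\varphi_n(R)\le\sum_{i\le k_n}q(i)R^i<\varphi(R)=1$. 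Set $c_n:=1-\varphi_n(R)>0$. As $\varphi_n$ is a fixed polynomial and $R_{n,m_k}\to R$, continuity gives
$$
P_{m_k}(R_{n,m_k})=1-\varphi_n(R_{n,m_k})\longrightarrow c_n>0,
$$
so $P_{m_k}(R_{n,m_k})$ is bounded away from $0$ for large $k$.

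The crux is then an elementary degree estimate. Because each exponent $i$ appearing in $P_m$ satisfies $i\ge a(m)$, termwise comparison yields, for $z>0$,
$$
P_m'(z)=\frac1z\sum_{i=a(m)}^{k_m} i\,q^{(2)}(m,i)z^i\ \ge\ \frac{a(m)}{z}\,P_m(z).
$$
Evaluating at $z=R_{n,m_k}$ and using $a(m_k)\to\infty$ (immediate since $k_i\ge i$, whence $a$ is well defined and tends to infinity), $R_{n,m_k}\to R$, and $P_{m_k}(R_{n,m_k})\to c_n>0$, I obtain $P'_{m_k}(R_{n,m_k})\ge \frac{a(m_k)}{R_{n,m_k}}P_{m_k}(R_{n,m_k})\to\infty$. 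Finally, since $\varphi_{n,m}=\varphi_n+P_m$ and $\varphi_n'$ is a fixed polynomial, $\varphi_n'(R_{n,m_k})\to\varphi_n'(R)<\infty$ stays bounded, so $\varphi'_{n,m_k}(R_{n,m_k})=\varphi_n'(R_{n,m_k})+P'_{m_k}(R_{n,m_k})\to\infty$, which is the assertion.

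I expect the genuine obstacle to lie not in the degree estimate, which is elementary, but in guaranteeing that $P_{m_k}(R_{n,m_k})$ is bounded below: this rests entirely on the convergence $R_{n,m_k}\to R$ supplied by Lemma \ref{irregular}, for it is only this that pins $\varphi_n(R_{n,m_k})$ strictly below $1$ and thereby forces the long new cycles to carry a nonvanishing share of the total weight.
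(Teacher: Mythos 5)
Your proposal is correct and follows essentially the same route as the paper: the same decomposition $\varphi_{n,m}=\varphi_n+P_m$ with all monomials of $P_m$ of degree at least $a(m)$, the same termwise bound $P_m'(z)\ge a(m)z^{-1}P_m(z)$, and the same appeal to Lemma \ref{irregular} to send $R_{n,m_k}\to R$ so that $P_{m_k}(R_{n,m_k})\to 1-\varphi_n(R)>0$ while $a(m_k)\to\infty$. You even supply a justification (infinitely many nonzero $q(i)$ because $R<\infty$) for the positivity of $1-\varphi_n(R)$ that the paper leaves implicit.
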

\begin{proof}
Owing to \eqref{genFunct2}
\begin{align}
\label{otsenkaPHI'} \varphi'&_{n,m}(R_{n,m})=\varphi'_n(R_{n,m})+
\sum_{i=a(m)}^{k_m}iq^{(2)}(m,i)(R_{n,m})^{\,i-1}\notag\\
\ge\,& a(m)(R_{n,m})^{-1}
\sum_{i=a(m)}^{k_m}q^{(2)}(m,i)(R_{n,m})^{\,i}
=a(m)(R_{n,m})^{-1}\big[\varphi_{n,m}(R_{n,m})-
\varphi_n(R_{n,m})\big]\notag\\
=\,& a(m)(R_{n,m})^{-1}\big[1-\varphi_n(R_{n,m}) \big].
\end{align}
By Lemma \ref{seqPHI'} there is a sequence of positive integers
$m_k=m_k(n)$ such that $\lim_{k\to\infty}m_k=\infty$ and
$\lim_{{k\to\infty}}R_{n,m_k}=R$. Taking $m=m_k$ in
\eqref{otsenkaPHI'} and passing to the limit as $k\to\infty$, we
obtain $\lim_{k\to\infty}a(m_k)R^{-1}[1-\varphi_n(R)]$ on the
right-hand side. Because in this expression all factors independent
of $k$ are positive, while $a(m_k)\to\infty$ as $k\to\infty$, we
come to our statement.
\end{proof}

\begin{theorem}
\label{Unstab} There exists an increasing sequence of numbers
$n_k\in\mathbb N$ such that the sequence of subgraphs
$G_{n_k,n_{k+1}}\subset G$ is irregular.
\end{theorem}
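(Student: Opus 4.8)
The plan is to build the sequence $\{n_k\}$ recursively from Lemma \ref{seqPHI'} and then to read off irregularity from the criterion of Lemma \ref{critReg}. The first thing I would check is that, for \emph{any} strictly increasing sequence of positive integers $\{n_k\}$, the subgraphs $H_k := G_{n_k,n_{k+1}}$ form an exhaustive sequence in the sense of Definition \ref{exhaust}. Since $n_k<n_{k+1}$, the graph $H_{k+1}=G_{n_{k+1},n_{k+2}}$ contains every $\Gamma_\ell$ with $\ell\le n_{k+1}$, and in particular it contains all of $\cup_{\ell\le n_k}\Gamma_\ell$ together with $\Gamma_{n_{k+1}}$; hence $H_k\subset H_{k+1}$. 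Moreover $\cup_k H_k$ contains $\Gamma_\ell$ for every $\ell$ (because $n_k\to\infty$), and since every edge of a linear graph lies on at least one simple $v$-cycle, the union of the $H_k$ exhausts both $V$ and $E$.

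Next I would construct $\{n_k\}$ so that the relevant derivatives blow up. Set $n_1=1$ and suppose $n_1<\dots<n_k$ have already been chosen. Applying Lemma \ref{seqPHI'} with $n=n_k$ produces a sequence of integers $m>n_k$ tending to infinity along which $\varphi'_{n_k,m}(R_{n_k,m})\to\infty$; from this sequence I pick an integer $n_{k+1}>n_k$ with $\varphi'_{n_k,n_{k+1}}(R_{n_k,n_{k+1}})>k$. This defines a strictly increasing sequence $\{n_k\}$ for which $\varphi'_{n_k,n_{k+1}}(R_{n_k,n_{k+1}})\to\infty$ as $k\to\infty$.

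Finally I would invoke Lemma \ref{critReg} for the exhaustive sequence $\{H_k\}=\{G_{n_k,n_{k+1}}\}$. The number $R(k):=R_{n_k,n_{k+1}}$ is precisely the root of $\Phi_{v,v|v}(H_k,\mathcal W,z)=1$ demanded by the criterion (cf. \eqref{defR}), and by construction $\frac{d}{dz}\Phi_{v,v|v}(H_k,\mathcal W,z)|_{z=R(k)}=\varphi'_{n_k,n_{k+1}}(R_{n_k,n_{k+1}})\to\infty$. Taking the fixed vertex $v$ as the vertex in the statement of Lemma \ref{critReg}, the criterion then yields that $\{H_k\}$ is irregular, which is exactly the asserted conclusion.

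I expect the only genuinely delicate point to be the exhaustiveness step: the graphs $G_{n_k,n_{k+1}}$ are ``thinned'', in that they omit all simple $v$-cycles of intermediate length strictly between $n_k$ and $n_{k+1}$, so one must confirm that they still nest monotonically and still cover all of $G$. Once that is secured, the blow-up of the derivatives is a routine diagonal extraction from Lemma \ref{seqPHI'}, and the conclusion follows mechanically from Lemma \ref{critReg}.
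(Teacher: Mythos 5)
Your proof is correct and follows essentially the same route as the paper's: a recursive application of Lemma \ref{seqPHI'} to choose $n_{k+1}>n_k$ with $\varphi'_{n_k,n_{k+1}}(R_{n_k,n_{k+1}})>k$, followed by an appeal to Lemma \ref{critReg}. Your explicit verification that the graphs $G_{n_k,n_{k+1}}$ nest and exhaust $G$ (via the observation that every edge of a linear graph lies on a simple $v$-cycle) is a point the paper leaves implicit, and it is a worthwhile addition rather than a deviation.
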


\begin{proof}
The desired sequence will be constructed by induction. As $n_1$ one
can take an arbitrary positive integer, e.g., $n_1=1$. By Lemma
\ref{seqPHI'} there exists $n_2>n_1$ such that
$\varphi'_{n_1,n_2}(R_{n_1,n_2})>1$. Assume that $n_1,\dots,n_k$ for
which $\varphi'_{n_i,n_{i+1}}(R_{n_i,n_{i+1}})>i$ when $1\le i\le
k-1$ are already found. Using the same Lemma \ref{seqPHI'}, one can
find $n_{k+1}>n_k$ such that
$\varphi'_{n_k,n_{k+1}}(R_{n_k,n_{k+1}})>k$. We see that for the
sequence obtained,
$$
\lim_{k\to\infty}\varphi_{n_k,n_{k+1}} '(R_{n_k,n_{k+1}})=\infty,
$$
which implies irregularity of the exhaustive sequence of subgraphs
$G_{n_k,n_{k+1}}$ (see Lemma \ref{critReg}).
\end{proof}

\section{Existence of a regular sequence}
\label{Regular}
\smallskip

In this section we use the notation of the two previous ones without
recalling this every time.

In Section \ref{Irregular} an irregular sequence of finite
sub-graphs of a linear graph $G$ was constructed, leaning upon the
finiteness condition of Section \ref{LinGraph}. A regular sequence
will be constructed under a somewhat stronger {\it bounded jumps}
condition: there exists a constant $K>0$ such that for every forward
jump, i.e., an edge of the form $(i,j)$, where $i<j$, the inequality
$j-i<K$ asserts. Without loss of generality $K$ will be taken as
being a positive integer.

\begin{lemma}
\label{G(n)} For all $n\ge1$,
\begin{equation}
\label{ineq} (a)\ R_n\ge R,\ \ (b)\
\varphi'_n(R_n)\ge\varphi'_n(R),\ \ (c)\
\varphi'_n(R)\le\varphi'_{n+1}(R),
\end{equation}
and
\begin{equation}
\label{eq} \lim_{t\to\infty}\varphi'_n(R)=\varphi'(R).
\end{equation}
\end{lemma}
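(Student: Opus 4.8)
The plan is to reduce all four assertions to a single structural fact: the polynomials $\varphi_n$ have nonnegative coefficients that are dominated by the coefficients $q(i)$ of $\varphi$ and are nondecreasing in $n$. First I would record that $G_n\subset G_{n+1}$, which is immediate from the definition $G_n=G(\cup_{\ell\le n}\Gamma_\ell)$ in \eqref{sub-graphs}, since the generating family of simple $v$-cycles grows with $n$. Consequently every simple $v$-cycle of $G_n$ is a simple $v$-cycle of $G_{n+1}$, so if $c_n(i)$ denotes the coefficient of $z^i$ in $\varphi_n$ (the total weight of simple $v$-cycles of length $i$ in $G_n$), then $0\le c_n(i)\le c_{n+1}(i)\le q(i)$ for every $i$, with equality $c_n(i)=q(i)$ as soon as $n\ge i$ (for then $G_n$ contains exactly the simple $v$-cycles of $G$ of that length). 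These bounds are precisely the content of Corollary \ref{GenFun} together with \eqref{coef<}.

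For part (a) I would use that $\varphi_n$ is increasing on $\mathbb R^+$ (the sentence following \eqref{genFunct}) and that $c_n(i)\le q(i)$ gives, termwise, $\varphi_n(R)\le\varphi(R)=1=\varphi_n(R_n)$ by \eqref{unstab} and \eqref{defR}; monotonicity of $\varphi_n$ then forces $R_n\ge R$. Part (b) follows at once: since $\varphi_n$ has nonnegative coefficients, so does $\varphi_n'$, hence $\varphi_n'$ is nondecreasing on $\mathbb R^+$, and $R_n\ge R$ yields $\varphi_n'(R_n)\ge\varphi_n'(R)$. Part (c) is the coefficientwise comparison, differentiated: from $c_n(i)\le c_{n+1}(i)$ one gets $i\,c_n(i)\le i\,c_{n+1}(i)$, so $\varphi_n'(z)\le\varphi_{n+1}'(z)$ for every $z\ge0$, and in particular at $z=R$.

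It remains to prove the limit \eqref{eq} (where the index should read $n\to\infty$). Here the sequence $\varphi_n'(R)$ is nondecreasing by part (c) and bounded above by $\varphi'(R)=\sum_i i\,q(i)R^{i-1}$, which is finite precisely because $(G,\mathcal W)$ is a UPLG (see \eqref{unstab}); hence $\varphi_n'(R)$ converges to some $L\le\varphi'(R)$. For the reverse inequality I would fix $N$, note that $c_n(i)=q(i)$ for all $i\le N$ once $n\ge N$, so $\varphi_n'(R)\ge\sum_{i=1}^N i\,q(i)R^{i-1}$; letting $n\to\infty$ and then $N\to\infty$ gives $L\ge\varphi'(R)$, whence $L=\varphi'(R)$.

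The only step requiring genuine care is this last one: parts (a)--(c) are elementary monotonicity statements, but \eqref{eq} asks to exchange a limit in $n$ with the infinite summation defining $\varphi'$. The exchange is justified by the monotone convergence supplied by part (c) and, crucially, by the finiteness $\varphi'(R)<\infty$ coming from the unstable positive hypothesis; without that finiteness the limit could fail to equal $\varphi'(R)$. I note that none of the arguments above invokes the bounded jumps condition of this section, only the defining finiteness condition of a linear graph, so Lemma \ref{G(n)} holds for every linear graph whose loaded version is a UPLG.
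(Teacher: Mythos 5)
Your proof is correct and follows essentially the same route as the paper: the paper dismisses (a)--(c) as "very simple" and proves \eqref{eq} by exactly your truncation argument (choosing $k(\varepsilon)$ so that the partial sum $\sum_{k\le k(\varepsilon)}kq(k)R^{k-1}$ exceeds $\varphi'(R)-\varepsilon$, then using that all simple $v$-cycles of length $\le n$ in $G$ lie in $G_n$, combined with the upper bound $\varphi_n'(R)\le\varphi'(R)$). Your observations that the subscript in \eqref{eq} should read $n\to\infty$ and that the finiteness $\varphi'(R)<\infty$ comes from the UPLG hypothesis are both accurate.
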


\begin{proof}
Inequalities (a) -- (c) in \eqref{ineq} are very simple, so we prove
only \eqref{eq}.

By the definition of $\varphi$
$$
\varphi'(R)=\sum_{k=1}^\infty kq(k)R^{\,k-1}.
$$
Owing to the stable positivity, this series converges (true, if it
even diverged, \eqref{eq} would assert as well). Then for each
$\varepsilon>0$, one can find $k(\varepsilon)\in\mathbb N$ such that
$$
\sum_{k=1}^{k(\varepsilon)} kq(k)R^{\,k-1}>\varphi'(R)-\varepsilon.
$$
By the definition of $G_n$ all simple $v$-cycles of length $\le n$
in $G$ are also simple $v$-cycles in $G_n$. Hence for all $n\ge
k(\varepsilon)$ we have $\varphi'_n(R)\ge\varphi'(R)-\varepsilon$.
Consequently, $\varliminf_{n\to\infty}\varphi'_n(R)\ge\varphi'(R)$.
On the other hand, it is clear that $\varphi'_n(R)\le\varphi'(R)$
for all $n$ (fc. (c) in \eqref{ineq}) and thus
$\varlimsup_{n\to\infty}\varphi'_n(R)\le\varphi'(R)$. From this we
obtain \eqref{eq}.
\end{proof}

\begin{lemma}
\label{deg} Let $P$ be a polynomial of degree ${\rm deg}\,P$ with
nonnegative coefficients and $d\ge{\rm deg}\,P$. Then for each pair
$a,b>0$ such that $a\le b$, we have $a^dP(b)\le b^{\,d}P(a)$.
\end{lemma}
\begin{proof}
Let $P(x):=\sum_{0\le i\le {\rm deg}\,P}c_ix^i$. Summing over all
$i$ the evident inequalities
$$
a^dc_ib^i\le b^dc_ia^i,\ \ 0\le i\le {\rm deg}\,P,
$$
we come to the statement of the lemma.
\end{proof}

\begin{theorem}
\label{regular} The sequence of the graphs $G_n$ (see
\eqref{sub-graphs}) is regular.
\end{theorem}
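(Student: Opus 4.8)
The goal is to show that the exhaustive sequence $\{G_n\}$ is regular, which by Lemma~\ref{critReg} amounts to proving
\[
\lim_{n\to\infty}\varphi'_n(R_n)=\varphi'(R).
\]
The plan is to sandwich $\varphi'_n(R_n)$ between two quantities that both converge to $\varphi'(R)$. The lower bound is already essentially available: by part (b) of \eqref{ineq} we have $\varphi'_n(R_n)\ge\varphi'_n(R)$, and \eqref{eq} gives $\lim_{n\to\infty}\varphi'_n(R)=\varphi'(R)$, so $\varliminf_{n\to\infty}\varphi'_n(R_n)\ge\varphi'(R)$. The entire difficulty is therefore in the matching upper bound $\varlimsup_{n\to\infty}\varphi'_n(R_n)\le\varphi'(R)$, and this is where the \emph{bounded jumps} hypothesis must enter: without control on how far $R_n$ can exceed $R$, the factor $(R_n)^{i-1}$ in $\varphi'_n(R_n)=\sum_i iq^{(1)}(n,i)(R_n)^{i-1}$ could blow up relative to the same sum evaluated at $R$.

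The key technical device I expect to use is Lemma~\ref{deg}. Since $\varphi_n$ is a polynomial of degree at most $k_n$ with nonnegative coefficients, applying the lemma with $a=R$, $b=R_n\ge R$ (part (a) of \eqref{ineq}) and $d=k_n$ gives a comparison like $R^{k_n}\varphi_n(R_n)\le (R_n)^{k_n}\varphi_n(R)$, and an analogous comparison should hold for the polynomial $\varphi'_n$ (or for $z\varphi'_n(z)$, whichever has the cleaner degree bookkeeping). Combined with the normalization $\varphi_n(R_n)=1$ from \eqref{defR} and the bound $\varphi_n(R)\le\varphi(R)=1$, this yields an inequality of the form $(R_n/R)^{k_n}\ge \varphi_n(R_n)/\varphi_n(R)\ge 1$, controlling the ratio $R_n/R$ from below but, more usefully, feeding into an upper estimate for $\varphi'_n(R_n)$ in terms of $\varphi'_n(R)$ times a factor $(R_n/R)^{k_n}$ or similar.

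The crux, and the step I expect to be the main obstacle, is showing that this amplifying factor stays bounded (indeed tends to $1$) as $n\to\infty$, and this is precisely where bounded jumps are indispensable. Under that hypothesis the maximal vertex $M(\gamma)$ of a simple $v$-cycle of length $\ell$ cannot exceed roughly $K\ell$, so $k_n\le Kn$; thus the degree $k_n$ grows only linearly in $n$. One must then argue that $R_n\to R$ fast enough that $(R_n/R)^{k_n}\to 1$. Establishing $R_n\downarrow R$ with a rate compatible with the linear growth of $k_n$ is the delicate quantitative heart of the argument: since $\varphi_n(R_n)=1=\varphi(R)$ and $\varphi_n\to\varphi$ coefficientwise, $R_n-R$ is governed by the tail $\sum_{i>n}q(i)R^i$ together with the defect $\sum_{n<i\le k_n}(q(i)-q^{(1)}(n,i))R^i$, and one needs these to decay quickly enough (exponentially, beating the linear exponent $k_n\le Kn$) that $k_n(R_n-R)\to 0$. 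Once that rate estimate is secured, the upper bound $\varlimsup_{n\to\infty}\varphi'_n(R_n)\le\varphi'(R)$ follows, the sandwich closes, and Lemma~\ref{critReg} delivers regularity.
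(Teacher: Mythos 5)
Your overall architecture matches the paper's proof exactly: the lower bound from part (b) of \eqref{ineq} together with \eqref{eq}, the upper bound via Lemma~\ref{deg} applied to $\varphi'_n$ with degree bound $Kn$ coming from bounded jumps, and the reduction of everything to showing that the amplification factor $(R_n/R)^{Kn}$ tends to $1$, i.e.\ that $n\,\delta_n\to0$ where $\delta_n=R_n-R$. So the approach is the right one. But you stop at precisely the step you yourself call the ``delicate quantitative heart'' and do not supply it, and the route you sketch for it is the wrong one: you ask for the tail $\sum_{i>n}q(i)R^i$ to decay \emph{exponentially} in $n$ so as to beat the linear exponent $Kn$. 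That is both unattainable and unnecessary. It is unattainable because $R$ is the radius of convergence of $\sum q(i)z^i$, so $\varlimsup_i (q(i)R^i)^{1/i}=1$ and the tail cannot decay exponentially for a general UPLG. It is unnecessary because all one needs is $\delta_n=o(1/n)$, which is far weaker.

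The missing argument, as the paper gives it, is in two short steps. First, by convexity of $\varphi_n$, $\varphi_n(R_n)-\varphi_n(R)\ge\delta_n\varphi'_n(R)$, and since $\varphi_n(R_n)=1=\varphi(R)$ this gives $\delta_n\le[\varphi(R)-\varphi_n(R)]/\varphi'_n(R)\le C[\varphi(R)-\varphi_n(R)]$ for large $n$ (using \eqref{eq} to bound the denominator away from $0$). Second, since every term of $\varphi-\varphi_n$ has index $i>n$, one has
\begin{equation*}
n[\varphi(R)-\varphi_n(R)]\le n\sum_{i=n+1}^\infty q(i)R^i\le\sum_{i=n+1}^\infty iq(i)R^i,
\end{equation*}
and the right-hand side is a tail of the convergent series $\varphi'(R)<\infty$ (finiteness being part of the UPLG hypothesis \eqref{unstab}), hence tends to $0$. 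This yields $\delta_n=o(1/n)$ with no rate assumption on the coefficients at all; the finiteness of $\varphi'(R)$ is exactly what substitutes for the exponential decay you were reaching for. With that, $(1+\delta_n/R)^{Kn}\to1$ and your sandwich closes as intended.
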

\begin{proof}
Owing to the bounded jumps condition (see \S\,\ref{Notat}), for each
simple $v$-cycle $\gamma$ in $G_n$ we have $M(\gamma)\le Kn$. Hence
$V(G_n)\subset\{1,2,\dots,Kn\}$ and $\ell(\gamma)\le Kn$. This
implies that the degree of the polynomial $\varphi'_n$ does not
exceed $Kn-1$.

Apply Lemma \ref{deg} to the polynomial $P(z):=\varphi'_n(z)$,
taking $d:=Kn$, $a:=R$, and $b:=R_n$ ($a\le b$ by inequality (a) in
\eqref{ineq}). By this lemma $R^{\,Kn}\varphi'_n(R_n)\le
(R_n)^{Kn}\varphi'_n(R)$, i.e.,
\begin{equation}
\label{phi'n}
\frac{\varphi'_n(R_n)}{\varphi'_n(R)}\le\frac{(R_n)^{Kn}}{R^{\,Kn}}=
\left(1+\frac{\delta_n}{R}\right)^{Kn},
\end{equation}
where $\delta_n=R_n-R$.

Let us estimate $\delta_n$. From the obvious convexity of the
function $\varphi_n$ we obtain
$$
\varphi_n(R_n)-\varphi_n(R)\ge\delta_n\varphi'_n(R),\ \text{ or }
\delta_n\le\frac{\varphi_n(R_n)-\varphi_n(R)}{\varphi'_n(R)}=
\frac{\varphi(R)-\varphi_n(R)}{\varphi'_n(R)}.
$$
When $n\to\infty$, the denominator of the last fraction tends to
$\varphi'(R)$ (see \eqref{eq}). Hence for all sufficiently big $n$,
\begin{equation}
\label{delta<} \delta_n\le C[\varphi(R)-\varphi_n(R)],
\end{equation}
where $1/2\varphi'(R)$ can be taken for $C$ .

Let us recall that the length of every simple $v$-cycle in the graph
$G_n$ does not exceed $Kn$. From this it follows that
$$
\varphi_n(z)=\sum_{i=1}^nq(i)z^i+\sum_{i=n+1}^{Kn}q(n,i)z^i,\ \ 0\le
q(n,i)\le q(i),
$$
and hence
\begin{align}
\label{nPhi<} n[\varphi(R)-\varphi_n(R)]= &
n\sum_{i=n+1}^{Kn}(q(i)-q(n,i))R^i+
n\sum_{i=Kn+1}^\infty q(i)R^i\notag \\
\le & n\sum_{i=n+1}^\infty q(i)R^i\le\sum_{i=n+1}^\infty iq(i)R^i.
\end{align}
The last sum is a remainder of the converging series representing
$\varphi'(R)$ and hence goes to zero as $n\to\infty$. Thus
$\delta_n=o(1/n)$ (see \eqref{delta<}). Together with \eqref{phi'n}
this yields
$$
\varlimsup_{n\to\infty}\frac{\varphi'_n(R_n)}{\varphi'_n(R)}\le 1,
$$
and since the denominator in the last fraction tends to
$\varphi'(R)$ (see \eqref{eq}), we obtain
$$
\varlimsup_{n\to\infty}\varphi'_n(R_n)\le\varphi'(R).
$$
On the other hand, it is evident that
$$
\varliminf_{n\to\infty}\varphi'_n(R_n)\ge
\varliminf_{n\to\infty}\varphi'_n(R)=\lim_{n\to\infty}\varphi'_n(R)=
\varphi'(R).
$$
Hence $\lim_{n\to\infty}\varphi'_n(R_n)=\varphi'(R)$, which proves
the theorem (see Lemma \ref{critReg}).
\end{proof}

\section{Concluding remarks}

1. The question on asymptotic behavior of equilibrium measures
corresponding to finite sub-graphs can be set up in a somewhat
different way. To do this we have to make use of the notion of a
principal subgraph of a directed graph. It is natural to introduce
this notion similarly to the notion of a principal sub-matrix of a
finite or infinite matrix (see \cite{HJ}).

We refer to a subgraph $\tilde G\subset G$ as {\it principal} (or
{\it induced}) if the set $E(\tilde G)$ of its edges consists of all
edges of $G$ whose initial and terminal vertices belong to $V(\tilde
G)$.

It turns out that if we require in the conjecture on regular and
irregular sequences that they consist of principal subgraphs, then
for irregular sequences it is false, while for regular ones the
question remains open. (Observe that in all cases known up to now
the conjecture was confirmed in this strengthened form.)

2. We now turn to irregular sequences. The corresponding sequences
of equilibrium measures converge to the null measure. By "mixing"
such a sequence with a regular one, we obtain a sequence of
subgraphs whose equilibrium measures have no limit. It is reasonable
to call such sequences irregular as well, thus generalizing this
notion. If such a sequence exists for every UPLG is unknown.

The author is greatful to S.A. Komech, V.I. Oseledets and S.A.
Pirogov for helpful comments.

\end{document}